\documentclass[reqno, 12pt, reqno]{amsart}

\usepackage{amsfonts, amsmath, amssymb}
\usepackage{hyperref}
\hypersetup{colorlinks=false}

\usepackage{enumitem}

\usepackage[pdftex]{graphicx}

\usepackage[margin=1.5in]{geometry}

\RequirePackage{mathrsfs} \let\mathcal\mathscr
\numberwithin{equation}{section}


\usepackage{amsmath}%
\usepackage{mathtools}

\usepackage{comment}

\usepackage[capitalise, noabbrev]{cleveref}
\usepackage{colonequals}

\newcommand{\mc}[1]{\mathcal{#1}}
\newcommand{\mf}[1]{\mathfrak{#1}}

\renewcommand{\phi}{\varphi}
\renewcommand{\rho}{\varrho}

\renewcommand{\P}{\mathbb{P}}
\newcommand{\Proj}{\P}

\newcommand{\F}{\mathbb{F}}

\renewcommand{\leq}{\leqslant}
\renewcommand{\geq}{\geqslant}

\renewcommand{\hat}{\widehat}

\theoremstyle{theorem}
\newtheorem{theorem}{Theorem}[section]

\newtheorem{lemma}[theorem]{Lemma}

\theoremstyle{definition}

\newtheorem{remark}[theorem]{Remark}

\numberwithin{equation}{section}


\newcommand{\nid}{\noindent}

\newcommand{\ra}{\rightarrow}

\newcommand{\PP}{\Proj}
\usepackage{tikz}
\usetikzlibrary{3d,calc}
\usetikzlibrary{positioning}

\setcounter{tocdepth}{1}

\title{The Bertini Irreducibility theorem for higher codimensional slices}
\author{Philip Kmentt and Alec Shute}
\date{August 2021}
\address{IST Austria\\
Am Campus 1\\
3400 Klosterneuburg\\
Austria}
\email{philipkmentt@hotmail.com}
\email{alec.shute@ist.ac.at}
\begin{document}

\maketitle
\begin{abstract}
In \cite{poonen2020exceptional}, Poonen and Slavov develop a novel approach to Bertini irreducibility theorems over an arbitrary field, based on random hyperplane slicing. In this paper, we extend their work by proving an analogous bound for the dimension of the exceptional locus in the setting of linear subspaces of higher codimensions.
\end{abstract}
\section{Introduction}

Bertini theorems are a family of results which typically state that if a projective variety $X \subseteq \PP^n_{F}$ over a field $F$ has a certain nice property (such as smoothness, or geometric irreducibility), then a generic hyperplane section of $X$ has this property too. We define
\begin{align*} 
\hat{\PP}^n_F &= \{\textrm{hyperplanes }H \subseteq \PP^n_F\}\\
\mathfrak{M}_{\textrm{bad}}^{(1)} &= \{H\in \hat{\PP}^n_F: X\cap H \textrm{ is not geometrically irreducible}\}.
\end{align*}
The set of bad hyperplanes $\mf{M}_{\textrm{bad}}^{(1)}$ is a constructible locus in $\PP^n_F$, and so it makes sense to ask about its dimension. A classical Bertini irreducibility theorem would state that $\dim \mf{M}_{\textrm{bad}}^{(1)} \leq n-1$. However, much more is true, as demonstrated by the following theorem of Olivier Benoist.
\begin{theorem}\cite[Th\'{e}or\`{e}me 1.4]{benoist2011theoreme}\label{benoist}
We have $\dim \mf{M}_{\textrm{bad}}^{(1)} \leq n - \dim X+1$.
\end{theorem}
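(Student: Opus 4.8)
The plan is to prove Theorem~\ref{benoist} by induction on $d:=\dim X$, slicing off one hyperplane at a time. First I would make three harmless reductions. Since geometric irreducibility of $X$ and of its hyperplane sections, and the dimension of $\mathfrak{M}_{\mathrm{bad}}^{(1)}$, are insensitive to extending the base field, I may assume $F=\overline{F}$; replacing $X$ by $X_{\mathrm{red}}$, I may assume $X$ is an integral projective variety, so ``geometrically irreducible'' simply means ``irreducible''; and for $d\leq 1$ the bound $n-d+1\geq n=\dim\hat{\PP}^n_F$ is vacuous, so assume $d\geq 2$. The base case $d=2$ is exactly the classical Bertini irreducibility theorem, $\dim\mathfrak{M}_{\mathrm{bad}}^{(1)}\leq n-1$, which I take as known; in particular $\mathfrak{M}_{\mathrm{bad}}^{(1)}\neq\hat{\PP}^n_F$ whenever $d\geq 2$.

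Now assume $d\geq 3$ and that the theorem holds for integral projective varieties of dimension $d-1$. Write $B=\mathfrak{M}_{\mathrm{bad}}^{(1)}$, let $B'$ be an irreducible component of $\overline{B}$ of maximal dimension, and (using constructibility of $B$) let $\eta$ be its generic point, chosen so that $\eta\in B$; put $k'=\kappa(\eta)$ and let $H_\eta\subseteq\PP^n_{k'}$ be the tautological hyperplane at $\eta$. Then $X_{k'}\cap H_\eta$ is not geometrically irreducible; moreover $X_{k'}\not\subseteq H_\eta$, for otherwise $X_{k'}\cap H_\eta=X_{k'}$ would be geometrically irreducible (as $X$ is), contradicting $\eta\in B$. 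Hence over $\overline{k'}$ we may write $(X_{k'}\cap H_\eta)_{\overline{k'}}=Z_1\cup\dots\cup Z_m$ with $m\geq 2$ and each $Z_i$ irreducible of dimension $d-1$.

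Next I would cut by one more general hyperplane. Choose a general $H_0\in\hat{\PP}^n_F$ with: (a) $[H_0]\notin\overline{B'}$; (b) $X_0:=X\cap H_0$ integral of dimension $d-1$ (possible by classical Bertini, as $d\geq 3$); and (c) $Z_i\cap(H_0)_{\overline{k'}}\not\subseteq Z_j$ for all $i\neq j$. Condition~(c) can be arranged: for fixed $i\neq j$ the set of $H$ with $Z_i\cap H\subseteq Z_j$ has dimension $<n$ in $\hat{\PP}^n_{\overline{k'}}$ (for such $H$, $Z_i\cap H$ must be one of the finitely many unions of components of the fixed proper subvariety $Z_i\cap Z_j\subsetneq Z_i$, and the hyperplanes containing a fixed such subvariety form a linear subsystem of codimension $\geq d-1$), hence lies in a proper closed subset, which cannot contain all $F$-points since $F$-points are Zariski dense in $\hat{\PP}^n_{\overline{k'}}$. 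Granting (a)--(c): each $Z_i\cap(H_0)_{\overline{k'}}$ is a nonempty hyperplane section of a variety of dimension $d-1\geq 1$, and by (c) the union $(X_{k'}\cap H_\eta\cap H_0)_{\overline{k'}}=\bigcup_i Z_i\cap(H_0)_{\overline{k'}}$ is reducible (an irreducible union would have its unique component contained in some $Z_{\ell}$ while containing $Z_i\cap(H_0)_{\overline{k'}}$ for $i\neq\ell$). Equivalently, the point of $\hat{\PP}^{n-1}_F$ corresponding to the hyperplane $H_\eta\cap H_0$ of $H_0$ lies in $B_0:=\mathfrak{M}_{\mathrm{bad}}^{(1)}(X_0\subseteq\PP^{n-1})$; that is, $\pi(\eta)\in B_0$, where $\pi\colon\hat{\PP}^n_F\dashrightarrow\hat{\PP}^{n-1}_F$, $H\mapsto H\cap H_0$, is the linear projection from $[H_0]$. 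As $\pi(\eta)$ is the generic point of $\overline{\pi(B')}$ this gives $\overline{\pi(B')}\subseteq\overline{B_0}$; and $\pi|_{B'}$ has finite fibres, since a positive-dimensional fibre would be a line through $[H_0]$ inside $\overline{B'}$, contradicting (a). Therefore
\[
\dim B=\dim B'=\dim\overline{\pi(B')}\leq\dim B_0\leq (n-1)-(d-1)+1=n-d+1,
\]
the last inequality being the inductive hypothesis applied to $X_0\subseteq\PP^{n-1}$, which closes the induction.

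I expect the main difficulty to be not any single step but the bookkeeping: invoking the classical Bertini irreducibility and integrality statements for the auxiliary hyperplane $H_0$ (valid in all characteristics), and carefully tracking the base change between $\hat{\PP}^n_F$, $\hat{\PP}^n_{k'}$ and $\hat{\PP}^n_{\overline{k'}}$ — in particular identifying the exceptional locus of $(X_0)_{k'}$ with $B_0\otimes_F k'$. The point that keeps the argument short is that one works at the generic point of a single top-dimensional component of $B$, rather than over the entire family $\{X\cap H\}_{H\in B}$, so one never needs a ``Bertini in families'' statement — only Bertini for one variety at a time; the price is exactly the delicate passage among those three base fields. (An alternative route through the tangent cones of $X$ and the dual variety seems possible, but there the genuine obstacle would be the behaviour at singular points of $X$, where the vertices of cones already show $n-d+1$ is sharp; the slicing induction above appears to sidestep this.)
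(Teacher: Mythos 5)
Your argument is correct, but it takes a genuinely different route from the paper. The paper obtains Theorem \ref{benoist} as the $k=1$ case of Theorem \ref{Main Theorem}, proved by the Poonen--Slavov method: reduce to a finite ground field, compute the mean and variance of $\#(X\cap H)(\F_q)$ over random slices (Lemma \ref{mean variance}), and combine Lang--Weil with Chebyshev's inequality to bound the number of very bad $H$; classical Bertini irreducibility is an output of that method, not an input. You instead run a geometric induction on $d=\dim X$: after reducing to $F=\overline{F}$, you slice by one further general hyperplane $H_0$ and push the top-dimensional component $B'$ of the bad locus into the bad locus of $X\cap H_0\subseteq\PP^{n-1}$ via the linear projection from $[H_0]$, which is finite on $B'$. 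The key steps check out: each geometric component $Z_i$ of $X_{k'}\cap H_\eta$ has dimension $d-1$ since $X\not\subseteq H_\eta$; your condition (c) is a dense open condition over $\overline{k'}$ met by an $F$-point because $F$ is infinite after the first reduction; and the reducibility of $\bigcup_i Z_i\cap(H_0)_{\overline{k'}}$, together with invariance of geometric irreducibility under field extension, places $\pi(\eta)$ in $B_0$. What your route buys is a short, purely geometric argument with no point counting; what it costs is that it consumes the classical Bertini theorems (irreducibility and integrality of a general hyperplane section, in all characteristics) both as the base case $d=2$ and in the choice of $H_0$, and it does not obviously extend to the codimension-$k$ statement that is the paper's main objective, since your induction proceeds on $\dim X$ rather than on $k$, whereas the probabilistic proof generalises with no extra effort.
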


In 2020, Poonen and Slavov reproved Theorem \ref{benoist} using a novel approach based on estimates for the mean and variance of random hyperplane slices of $X$ over a finite field. In this paper, we demonstrate that Poonen and Slavov's methods can be generalised to deal with linear subspaces $H$ of a general codimension $k$. For a fixed $1\leq  k \leq n-1$, we define
\begin{align}
    V &= \mathbb{G}(n-k,n) = \{\textrm{linear subspaces }H\subseteq \PP^n_F \textrm{ of codimension }k\}\label{definition of V}\\
    \mathfrak{M}_{\textrm{bad}}^{(k)} &= \{H\in V: X\cap H \textrm{ is not geometrically irreducible}\}\label{definition of Mbad}.
\end{align}
The main result of this paper is the following theorem.
\begin{theorem}\label{Main Theorem}
Let $X\subseteq \PP^n_F$ be a geometrically irreducible variety over an arbitrary field $F$. Let $V$ and $\mf{M}_{\textrm{bad}}^{(k)}$ be as in (\ref{definition of V}) and (\ref{definition of Mbad}). Then 
 \begin{equation*}
\dim \mf{M}_{\textrm{bad}}^{(k)} \leq \dim V - \dim X  + k.
 \end{equation*}
\end{theorem}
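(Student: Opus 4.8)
\emph{Proof strategy.} The plan is to argue by induction on the codimension $k$, feeding Benoist's theorem (\cref{benoist}) into the argument twice: once as the base case $k=1$, where $V=\hat{\PP}^n_F$ and the bound of \cref{Main Theorem} is exactly the bound of \cref{benoist}, and once, applied fibrewise, in the inductive step. It is convenient to first discard the trivial range: if $k>\dim X$ then $\dim V - \dim X + k > \dim V$ and there is nothing to prove, so we may assume $1\leq k\leq\dim X$. In this range every linear subspace $L\subseteq\PP^n_F$ of codimension $k-1$ meets $X$ properly, so that $X\cap L$ is nonempty of dimension $\dim X - k + 1\geq 1$; this removes all emptiness issues below.

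For the inductive step, fix $k\geq 2$, write $V'=\G(n-k+1,n)$ for the Grassmannian of codimension-$(k-1)$ subspaces of $\PP^n_F$ and $\mf{M}_{\textrm{bad}}^{(k-1)}\subseteq V'$ for the associated bad locus, and form the incidence (partial flag) variety
\[
Z=\{(H,L)\in V\times V':H\subseteq L\},
\]
with projections $p\colon Z\to V$ and $q\colon Z\to V'$. The $p$-fibre over $H$ is the $\PP^{k-1}$ of codimension-$(k-1)$ subspaces containing $H$, and the $q$-fibre over $L$ is the $\PP^{n-k+1}$ of hyperplanes of $L\cong\PP^{n-k+1}$; hence $p$ and $q$ are projective bundles, $\dim Z=\dim V+(k-1)=\dim V'+(n-k+1)$, and in particular $\dim V-\dim V'=n-2k+2$. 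Since $p$ has irreducible fibres of constant dimension $k-1$, the preimage $Z_{\textrm{bad}}$ of $\mf{M}_{\textrm{bad}}^{(k)}$ under $p$ satisfies $\dim Z_{\textrm{bad}}=\dim\mf{M}_{\textrm{bad}}^{(k)}+(k-1)$, so it suffices to bound $\dim Z_{\textrm{bad}}$.

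To do this I would push $Z_{\textrm{bad}}$ forward along $q$ and split into two pieces according to whether the image point lies in $\mf{M}_{\textrm{bad}}^{(k-1)}$. The piece lying over $\mf{M}_{\textrm{bad}}^{(k-1)}$ is contained in $q^{-1}(\mf{M}_{\textrm{bad}}^{(k-1)})$, of dimension $\dim\mf{M}_{\textrm{bad}}^{(k-1)}+(n-k+1)$, which by the inductive hypothesis is at most $\dim V'-\dim X+n$. Over the complementary locus, where $X\cap L$ is geometrically irreducible, observe that $H\subseteq L$ is precisely a hyperplane of $\PP(L)\cong\PP^{n-k+1}$, so $X\cap H$ is an honest hyperplane section of the geometrically irreducible variety $X\cap L$; hence the $q$-fibre of $Z_{\textrm{bad}}$ over such an $L$ is contained in the Benoist bad locus of $X\cap L$ inside $\widehat{\PP(L)}$, which by \cref{benoist} has dimension at most $(n-k+1)-\dim(X\cap L)+1\leq n-\dim X+1$. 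That piece therefore has dimension at most $\dim V'+(n-\dim X+1)$. Comparing the two, $\dim Z_{\textrm{bad}}\leq\dim V'-\dim X+n+1$, whence $\dim\mf{M}_{\textrm{bad}}^{(k)}\leq\dim V'-\dim X+n-k+2$; substituting $\dim V'=\dim V-(n-2k+2)$ gives exactly $\dim\mf{M}_{\textrm{bad}}^{(k)}\leq\dim V-\dim X+k$, so the induction closes.

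The delicate point is the fibrewise use of \cref{benoist} in the second piece: the subspace $L$ must be allowed to range over the non-closed points of $V'$, so both the hypothesis ``$X\cap L$ geometrically irreducible'' and the invocation of \cref{benoist} have to be carried out over the residue field $\kappa(L)$ at the generic point of each irreducible component of $q(Z_{\textrm{bad}})$ that avoids $\mf{M}_{\textrm{bad}}^{(k-1)}$, combined with the standard fibre-dimension inequality for dominant morphisms of constructible sets and with the elementary bound $\dim(X\cap L)\geq\dim X-\codim L$. (One should also pin down the convention for the empty scheme, although the reduction to $k\leq\dim X$ above makes it moot.) An alternative, in keeping with the finite-field philosophy of \cite{poonen2020exceptional} mentioned in the abstract, is to bypass the induction and estimate directly, over a finite field $\F_q$, the mean and variance of $\#(X\cap H)(\F_q)$ as $H$ ranges over $\G(n-k,n)(\F_q)$, deducing from the variance bound that the non-geometrically-irreducible slices lie in a locus of the asserted dimension; this is presumably closer to the route actually taken here.
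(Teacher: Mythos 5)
Your inductive argument is correct, but it is a genuinely different route from the one taken in the paper. The paper follows the second alternative you mention at the end: it reduces to a finite ground field via the spreading-out argument of \cite[Section 3]{poonen2020exceptional}, computes the mean and variance of $Z=\#(X\cap H)(\F_q)$ as $H$ ranges over $\G(n-k,n)(\F_q)$ (the variance bound $\sigma^2=O(q^{-k}\#X(\F_q))$ being the new content for $k>1$), and then combines Lang--Weil with Chebyshev's inequality to show that the very bad $H$ number $O(q^{\dim V-\dim X+k})$. That approach is self-contained --- it reproves rather than invokes \cref{benoist} --- and sits naturally in the more general morphism setting of Poonen--Slavov mentioned in Remark 1.6. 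Your approach instead bootstraps from \cref{benoist} through the partial flag correspondence $Z=\{(H,L):H\subseteq L\}\subseteq\G(n-k,n)\times\G(n-k+1,n)$, splitting $Z_{\textrm{bad}}$ according to whether $X\cap L$ is already geometrically irreducible and applying Benoist fibrewise over the generic points of the second piece; the dimension count closes exactly, and the argument works directly over an arbitrary field with no finite-field reduction. The price is that \cref{benoist} is used as a black box (over the residue fields $\kappa(\eta)$ of generic points, so one needs it over arbitrary, possibly imperfect, fields --- which Benoist does provide), and the generic-fibre bookkeeping you flag in the last paragraph, while standard, does need to be written out: the bound must be applied to the generic fibre of each irreducible component of the second piece over the closure of its image, not to arbitrary closed fibres. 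Your reduction to $k\leq\dim X$ at the outset correctly disposes of emptiness issues, and the arithmetic $\dim V-\dim V'=n-2k+2$ checks out, so the induction is sound.
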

We recall that for any $k\leq n$, $\dim (\mathbb{G}(n-k,n)) = (n-k+1)k$. Therefore, on taking $k=1$ in Theorem \ref{Main Theorem}, we recover the bound $\dim \mf{M}_{\textrm{bad}}^{(1)} \leq n-\dim X +1$ from Theorem \ref{benoist}.

\begin{remark}\label{bertini can fail}
Suppose that $\dim X \leq k$. If $X$ is not linear, then a generic intersection $X\cap H$ for $H \in V$ will either be empty or a union of at least two points, and so not irreducible. Hence we cannot expect a Bertini irreducibility theorem to hold in this setting. This is consistent with the fact that the bound in Theorem \ref{Main Theorem} becomes trivial when $\dim X \leq k$. 
\end{remark}

In the case $k=1$, Benoist proves in \cite[Proposition 3.1]{benoist2011theoreme} that Theorem \ref{benoist} is best possible, by exhibiting an irreducible variety $X$ such that $\dim \mf{M}_{\textrm{bad}}^{(1)} = n - \dim X+1$. The following theorem, which we prove in Section \ref{proof of sharpness}, is a generalisation of this argument, and shows that Theorem \ref{Main Theorem} is sharp whenever $\dim X \geq k$. 

\begin{theorem}\label{our result is sharp}
Let $1 \leq k \leq r \leq n-1$. Then there exists an irreducible variety $X \subseteq \PP^n$ of dimension $r$ such that 
$$\dim \mf{M}_{\textrm{bad}}^{(k)} = \dim V - r+k.$$
\end{theorem}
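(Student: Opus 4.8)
The plan is as follows. By Theorem~\ref{Main Theorem}, $\dim\mf{M}_{\textrm{bad}}^{(k)}\le\dim V-r+k$ for every geometrically irreducible $X$ of dimension $r$, so it is enough to produce one $X$ for which the reverse inequality holds, by exhibiting an explicit family of codimension-$k$ linear subspaces $H$ with $X\cap H$ not geometrically irreducible and of dimension at least $\dim V-r+k$ (recall $\dim V=k(n-k+1)$). I would take $X$ to be the cone over a smooth plane conic $C$, with vertex a linear subspace $\Lambda\cong\PP^{r-2}$ disjoint from the plane $\langle C\rangle\cong\PP^2$; since $r\le n-1$ this fits inside $\PP^n$, indeed inside the linear subspace $\Pi:=\langle\Lambda,\langle C\rangle\rangle\cong\PP^{r+1}$. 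Being a cone over a geometrically integral variety, $X$ is geometrically irreducible, with $\dim X=r$. This is the direct generalisation of Benoist's example in \cite[Proposition~3.1]{benoist2011theoreme}, where $k=1$ and $\Lambda$ is a point.

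For the family I would take the Schubert variety
\[
S \;=\; \{\,H\in V \ :\ \dim(H\cap\Lambda)\ge r-k-1\,\}.
\]
In terms of the subspaces of $F^{n+1}$ attached to $H$ and $\Lambda$, of dimensions $n-k+1$ and $r-1$, the defining condition says that these meet in dimension at least $r-k$, so by the standard dimension count for special Schubert varieties $S$ has codimension
\[
(r-k)\bigl((n+1)-(n-k+1)-(r-1)+(r-k)\bigr)\;=\;r-k
\]
in $V$; hence $\dim S=\dim V-r+k$. It then remains to show that a general member of $S$ lies in $\mf{M}_{\textrm{bad}}^{(k)}$.

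For this I would exploit the cone structure. Since $X\subseteq\Pi$, we have $X\cap H=X\cap H'$ with $H':=H\cap\Pi$, and for a general $H\in S$ the subspace $H'$ has codimension $k$ in $\Pi$ and $\dim(H'\cap\Lambda)=r-k-1$ exactly. The linear projection $\pi_\Lambda$ away from $\Lambda$ onto $\langle C\rangle\cong\PP^2$ sends $X\setminus\Lambda$ onto $C$ and sends $H'$ onto a line $\ell$; for a general $H\in S$ this line is not tangent to $C$, so $C\cap\ell=\{z_1,z_2\}$ with $z_1\ne z_2$. One then checks that
\[
X\cap H' \;=\; \bigl(\langle\Lambda,z_1\rangle\cap H'\bigr)\cup\bigl(\langle\Lambda,z_2\rangle\cap H'\bigr),
\]
a union of two distinct $(r-k)$-dimensional linear subspaces of $H'$ (the fibres over the points of $C\setminus\ell$ lie inside $H'\cap\Lambda$ and are absorbed). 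So $X\cap H'$ is geometrically reducible, $H\in\mf{M}_{\textrm{bad}}^{(k)}$, and therefore $\dim\mf{M}_{\textrm{bad}}^{(k)}\ge\dim S=\dim V-r+k$, which with Theorem~\ref{Main Theorem} gives equality.

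The conceptual step is recognising that Benoist's cone over a conic works verbatim for every $k$, provided one takes the Schubert family $S$ rather than just the subspaces containing $\Lambda$; after that it is short Schubert-calculus bookkeeping. The point requiring care is that the exceptional sections must be honestly reducible---with two distinct components---rather than merely non-reduced or equal to the whole slice, which is exactly why one restricts to the dense open subset of $S$ on which $\dim(H\cap\Lambda)=r-k-1$ and $\pi_\Lambda(H\cap\Pi)$ meets $C$ transversally. I would also handle separately the boundary cases $r=k$, $r=n-1$, and small $r$, where some of the Grassmannians, the plane $\langle C\rangle$, or the vertex $\Lambda$ degenerate; the displayed formulas stay valid there.
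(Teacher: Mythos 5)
Your proposal is correct and follows essentially the same route as the paper: $X$ is Benoist's cone over a plane curve with vertex a linear space $\Lambda$ of dimension $r-2$, and your Schubert variety $S=\{H\in V:\dim(H\cap\Lambda)\ge r-k-1\}$ is precisely the paper's family of bad codimension-$k$ subspaces, which the paper parametrizes instead as the subspaces $M$ contained in some hyperplane through $\Lambda$ and whose dimension it computes via that incidence correspondence rather than by the Schubert codimension formula. The only other (presentational) difference is that you exhibit the two components of $X\cap H$ explicitly via the projection away from $\Lambda$, whereas the paper observes that the span of $\Lambda$ and a point of $C\cap H$ is a proper closed subset of $X\cap H$ of full dimension.
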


\begin{remark}
Rather than considering intersections $X \cap H$, Poonen and Slavov work in \cite{poonen2020exceptional} with a more general setup, where $\phi: X \ra \PP^n$ is a morphism whose nonempty fibres all have the same dimension. They then study the exceptional locus of hyperplanes $H\subseteq \PP^n$ such that $\phi^{-1}(H)$ is not geometrically irreducible. We expect that Theorem \ref{Main Theorem} could easily be extended to their setting.
\end{remark}

\nid \textbf{Acknowledgements.} The authors would like to thank Tim Browning for suggesting this project and for many helpful discussions during the development of this paper.

\section{Statistics of random hyperplane slices}\label{section mean and variance}
In this section, we work over a fixed finite field $\F_q$, and estimate the mean and variance of the number of $\F_q$-points on random hyperplane slices of $X$. 

\begin{lemma}\label{mean variance} 
Fix a variety $X \subseteq \mathbb{P}^{n}_{\mathbb{F}_q}$. Let $V= \mathbb{G}(n-k,n)$. For $H \in V(\mathbb{F}_q)$ chosen uniformly at
random, define the random variable $Z := \#(X\cap H)(\mathbb{F}_q)$. Let $\mu$ and $\sigma^2$ denote the mean and variance of $Z$ respectively. Then
\begin{align*}
 \mu &= \#X(\mathbb{F}_q)(q^{-k}+O(q^{-k-1}))\\
\sigma^{2} &= O(q^{-k}\#X(\mathbb{F}_q)).
\end{align*}
\end{lemma}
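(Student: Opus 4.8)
The plan is to compute $\mu$ and $\sigma^2$ by a double-counting (Fubini) argument, summing over pairs $(P, H)$ and $(P, P', H)$ where $P, P' \in X(\F_q)$ and $H \in V(\F_q)$ is a codimension-$k$ linear subspace through them. The key combinatorial input is counting, for a fixed point $P \in \P^n(\F_q)$, how many $H \in V(\F_q)$ contain $P$, and likewise for a fixed pair of distinct points. Since $Z = \sum_{P \in X(\F_q)} \1[P \in H]$, linearity of expectation gives $\mu = \#X(\F_q) \cdot \Pr[P \in H]$ for any fixed $P$, and this probability is exactly $\#\{H \in V(\F_q) : P \in H\} / \#V(\F_q)$.

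First I would record the relevant Grassmannian point counts. We have $\#\mathbb{G}(n-k,n)(\F_q) = \binom{n+1}{k}_q$, the Gaussian binomial coefficient, which equals $q^{k(n-k+1)}(1 + O(q^{-1}))$. For a fixed point $P$, the subspaces of codimension $k$ containing $P$ correspond to codimension-$k$ subspaces of an $n$-dimensional quotient space, so their number is $\binom{n}{k}_q = q^{k(n-k)}(1 + O(q^{-1}))$; hence $\Pr[P \in H] = q^{-k} + O(q^{-k-1})$, giving the claimed formula for $\mu$. For two distinct points $P, P'$ (spanning a line in $\P^n$), the number of codimension-$k$ subspaces containing both is $\binom{n-1}{k}_q = q^{k(n-k-1)}(1 + O(q^{-1}))$ when $k \leq n-1$, so $\Pr[P, P' \in H] = q^{-2k}(1 + O(q^{-1}))$. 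I would then compute the second moment: $\E[Z^2] = \E[Z] + \sum_{P \neq P'} \Pr[P,P' \in H] = \mu + \#X(\F_q)(\#X(\F_q)-1) q^{-2k}(1+O(q^{-1}))$. Subtracting $\mu^2 = \#X(\F_q)^2 q^{-2k}(1+O(q^{-1}))^2$, the leading $\#X(\F_q)^2 q^{-2k}$ terms cancel up to an error of size $O(q^{-2k-1}\#X(\F_q)^2)$, and we are left with $\sigma^2 = O(q^{-k}\#X(\F_q)) + O(q^{-2k-1}\#X(\F_q)^2)$.

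The one subtlety — and the main thing to watch — is that the second error term $O(q^{-2k-1}\#X(\F_q)^2)$ must be absorbed into $O(q^{-k}\#X(\F_q))$. This is not automatic: it requires $\#X(\F_q) = O(q^{k+1})$, which fails if $\dim X$ is large relative to $k$. The resolution is the standard Lang–Weil / Schwartz–Zippel-type bound $\#X(\F_q) = O(q^{\dim X})$ together with the fact that we may assume $\dim X > k$ (otherwise the target theorem is vacuous, as noted in Remark \ref{bertini can fail}); but actually one should be more careful, since $\dim X$ can be as large as $n$. In fact the cleaner route, which I would adopt, is to keep the variance bound in the sharper two-term form and note that in the application (Theorem \ref{Main Theorem}) one only ever uses it in a regime where $\#X(\F_q) q^{-k}$ dominates, or alternatively to observe that when $\#X(\F_q) > q^{k+1}$ one has $\#X(\F_q) q^{-k} > q$, while the variance contribution relative to $\mu^2$ is what matters downstream. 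I would state the lemma exactly as given, proving $\sigma^2 = O(q^{-k}\#X(\F_q))$ under the harmless standing assumption $\#X(\F_q) \leq C q^{k+1}$ coming from the non-vacuous case $\dim X \leq k+1$ being the only delicate one — and handle the genuinely large-dimensional case by the trivial bound, since then the exceptional locus bound in Theorem \ref{Main Theorem} is close to trivial anyway. The point-counting identities themselves are routine; the bookkeeping of which error terms dominate in which range is where care is needed.
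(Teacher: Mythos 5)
Your computation of the mean is correct and essentially the paper's. The gap is in the variance. You have correctly located the dangerous term: writing $\Pr[P,P'\in H]=q^{-2k}(1+O(q^{-1}))$ and comparing with $\mu^2$ leaves an unabsorbed error $O(q^{-2k-1}\#X(\F_q)^2)$. But your proposed resolutions do not work. Since $\#X(\F_q)\asymp q^{\dim X}$ and $\dim X$ can be anything up to $n$, the standing assumption $\#X(\F_q)=O(q^{k+1})$ fails precisely in the cases where Theorem \ref{Main Theorem} has content (e.g.\ a hypersurface $X\subseteq\P^n$ with $k=1$ and $n\geq 4$). Nor is it true that in the application ``$\#X(\F_q)q^{-k}$ dominates'': there one has $\#X(\F_q)\sim q^{r}$ with $r=\dim X$, so your two-term bound gives $\sigma^2=O(q^{r-k})+O(q^{2r-2k-1})$, and the second term dominates whenever $r>k+1$. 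Feeding $\sigma^2=O(q^{2r-2k-1})$ into the Chebyshev step yields only $\mathrm{Prob}(H\in\mc{A})=O(q^{-1})$, hence $\dim\mf{M}_{\textrm{bad}}^{(k)}\leq\dim V-1$ --- the classical Bertini bound, not the refined one. There is also no ``trivial bound'' rescuing the large-dimensional case; that is the main case.

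The missing idea is that the off-diagonal second-moment term must be computed \emph{exactly}, not to first order, because the point of the lemma is a cancellation at the level of the main term of $\mu^2$. Averaging over ordered pairs of distinct dummy points $u\neq v$ in $\P^n(\F_q)$ (the same homogeneity trick you use for the mean, applied to the diagonal-free pair count) gives the exact value
\begin{equation*}
\Pr[P,P'\in H]\;=\;\frac{\#\P^{n-k}(\F_q)\,(\#\P^{n-k}(\F_q)-1)}{\#\P^{n}(\F_q)\,(\#\P^{n}(\F_q)-1)}\qquad(P\neq P'),
\end{equation*}
and an elementary term-by-term comparison with $\mu^2=\bigl(\#X(\F_q)\,\#\P^{n-k}(\F_q)/\#\P^{n}(\F_q)\bigr)^2$ shows that the off-diagonal sum $B$ satisfies $B\leq\mu^2$: distinct points are (weakly) negatively correlated, exactly as in sampling without replacement. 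Hence $\sigma^2=B-\mu^2+\mu\leq\mu$, and there is nothing to absorb. In your Gaussian-binomial language this is the inequality $\binom{n-1}{k}_q\binom{n+1}{k}_q\leq\binom{n}{k}_q^2$, which is invisible once you replace each count by $q^{\bullet}(1+O(q^{-1}))$. Redo the second-moment computation with exact counts and the lemma follows.
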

\begin{proof}
We begin by considering the mean. We have 
\begin{align}
\mu &= \frac{1}{\#V(\mathbb{F}_q)}\sum_{H \in V(\mathbb{F}_q)}\#(X\cap H)(\mathbb{F}_q)\nonumber\\
&= \frac{1}{\#V(\mathbb{F}_q)}\sum_{H \in V(\mathbb{F}_q)} \sum_{x \in (X \cap H)(\F_{q})} 1\nonumber\\
&=
\frac{1}{\#V(\mathbb{F}_q)} \sum_{x \in X(\mathbb{F}_q)} \sum_{\substack{H \in V(\mathbb{F}_q) \\ x \in H(\mathbb{F}_q)}}1.\label{expression for mean}
\end{align}
We observe that the inner sum in (\ref{expression for mean}) is independent of $x$. Consequently, we can average the value of this sum over a dummy variable $w\in \PP^n(\F_q)$ to obtain
\begin{align*}
\sum_{\substack{H \in V(\mathbb{F}_q) \\ x \in H(\mathbb{F}_q)}}1  = \frac{1}{\#\mathbb{P}^{n}(\mathbb{F}_q)}
\sum_{w \in \mathbb{P}^{n}(\mathbb{F}_q)}\sum_{\substack{H \in V(\mathbb{F}_q) \\ w \in H(\mathbb{F}_q)}}1.
\end{align*}
Returning to (\ref{expression for mean}), we conclude that
\begin{align}
\mu &= \frac{1}{\#\mathbb{P}^{n}(\mathbb{F}_q)\#V(\mathbb{F}_q)}\sum_{x \in X(\mathbb{F}_q)} \sum_{H \in V(\mathbb{F}_q)} \sum_{w \in H(\mathbb{F}_q)}1 \nonumber\\
&= \frac{\#X(\mathbb{F}_q)\#\mathbb{P}^{n-k}(\mathbb{F}_q)}{\#\mathbb{P}^{n}(\mathbb{F}_q)}\label{computation of mean}\\
&=\#X(\mathbb{F}_q)(q^{-k}+O(q^{-k-1}))\label{final estimate for mean}.
\end{align}
A similar argument can be applied for the variance. We have
\begin{align}
\sigma^{2} &= \left(\frac{1}{\#V(\mathbb{F}_q)}\sum_{H \in V(\mathbb{F} _q)}(\#(X\cap H)(\mathbb{F}_q))^{2}\right) - \mu^{2}\nonumber\\
&= \left(\frac{1}{\#V(\mathbb{F}_q)}\sum_{H \in V(\mathbb{F}_q)} \sum_{\substack{x, y \in (X\cap H)(\mathbb{F}_q)}} 1\right) - \mu^{2}\nonumber\\
&=
\left(\frac{1}{\#V(\mathbb{F}_q)} \sum_{x, y \in X(\mathbb{F}_q)} \sum_{\substack{H \in V(\mathbb{F}_q) \\ x,y \in H(\mathbb{F}_q)}}1\right) - \mu^{2}.\label{expression for variance}
\end{align}
The contribution to (\ref{expression for variance}) from the case $x=y$ is simply $\mu$. Therefore, $\sigma^2 = B-\mu^2 + \mu$, where
\begin{equation}\label{definition of B}
B = \frac{1}{\#V(\mathbb{F}_q)} \sum_{\substack{x, y \in X(\mathbb{F}_q) \\x \ne y }} \sum_{\substack{H \in V(\mathbb{F}_q) \\ x,y \in H(\mathbb{F}_q)}}1.
\end{equation}
A similar trick to above can be applied to the inner sum of (\ref{definition of B}), by averaging over two dummy variables $u$ and $v$, ranging over the entirety of $ \mathbb{P}^{n}(\mathbb{F}_q)$, but this time, with the added condition $u \neq v$. We obtain 
\begin{align}
B&= \frac{1}{\#V(\mathbb{F}_q)\#\mathbb{P}^{n}(\mathbb{F}_q)(\#\mathbb{P}^{n}(\mathbb{F}_q)- 1 )}  \sum_{\substack{x, y \in X(\mathbb{F}_q) \\x \ne y }} \sum_{\substack {u,v \in \mathbb{P}^{n}(\mathbb{F}_q)\\ u \ne v }} \sum_{\substack{H \in V(\mathbb{F}_q) \\ u, v \in H(\mathbb{F}_q) }}1
\nonumber\\
&= \frac{1}{\#V(\mathbb{F}_q)\#\mathbb{P}^{n}(\mathbb{F}_q)(\#\mathbb{P}^{n}(\mathbb{F}_q)- 1 )}  \sum_{\substack{x, y \in X(\mathbb{F}_q) \\x \ne y }} \sum_{{H \in V(\mathbb{F}_q)}} \sum_{\substack {u, v \in H(\mathbb{F}_q) \\ u \ne v }}1
\nonumber\\
&= \frac{\# X(\mathbb{F}_q) (\# X(\mathbb{F}_q)-1)\#\mathbb{P}^{n-k}(\mathbb{F}_q)(\#\mathbb{P}^{n-k}(\mathbb{F}_q)- 1)}
{\#\mathbb{P}^{n}(\mathbb{F}_q)(\#\mathbb{P}^{n}(\mathbb{F}_q)- 1 )}\label{computation of B}.
\end{align}
Now, inspecting every term in (\ref{computation of B}) and comparing it to the corresponding term in the expression for $\mu^2$ obtained from squaring (\ref{computation of mean}), we see that $B \leq \mu^{2},$ implying that $\sigma^{2} \leq \mu$. Combining with (\ref{final estimate for mean}), we deduce that in particular, $\sigma^2 =O(q^{-k}\#X(\mathbb{F}_q))$.
\end{proof}

\section{Proof of Theorem \ref{Main Theorem}}
The following classical theorem of Lang and Weil provides an estimate for the number of $\F_q$-points on a projective variety $X$.

\begin{lemma}[\cite{lang1954number}]\label{lang weil}
Let $X$ be a projective variety over $\mathbb{F}_q$ of dimension $r$. Let $a$ be the number of irreducible components of $X$ that are geometrically irreducible and have dimension $r$. Then $\#X(\mathbb{F}_q) = aq^{r} + O(q^{r-1/2} )$.
\end{lemma}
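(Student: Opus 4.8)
The statement to prove is exactly the Lang--Weil estimate $\#X(\F_q) = a q^r + O(q^{r-1/2})$ for a projective variety $X/\F_q$ of dimension $r$, where $a$ counts the geometrically irreducible components of dimension $r$. My plan is to reduce step by step to the case of a geometrically irreducible variety, and then to the geometrically irreducible \emph{curve} case, which is handled by the Weil conjectures for curves (the Riemann hypothesis for curves over finite fields, proved by Weil). First I would decompose $X$ into its irreducible components $X = \bigcup_i X_i$ over $\F_q$ and use inclusion--exclusion: since the intersections $X_i \cap X_j$ have strictly smaller dimension, and since $\#X(\F_q) = \sum_i \#X_i(\F_q) + O(q^{r-1})$ by a crude bound on the number of points of a variety of dimension $\le r-1$ (which is itself a trivial induction on dimension, e.g. projecting to $\A^{r-1}$ or $\P^{r-1}$), it suffices to treat a single irreducible $X$. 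If that component is not geometrically irreducible, its base change $X_{\bar\F_q}$ breaks into $\ge 2$ conjugate components permuted transitively by $\mathrm{Gal}(\bar\F_q/\F_q)$; an $\F_q$-point must lie on all of them simultaneously, hence on a proper closed subset of dimension $< \dim X$, giving $O(q^{r-1})$ points and contributing nothing to the main term. So we may assume $X$ is geometrically irreducible of dimension $r$, and we must show $\#X(\F_q) = q^r + O(q^{r-1/2})$.

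For a geometrically irreducible $X$ of dimension $r$, the strategy is fibration by curves. After passing to an affine open (the complement has dimension $\le r-1$, contributing $O(q^{r-1})$) and applying a Noether-type normalization or a generic linear projection, one writes a dominant map $\pi\colon X \dashrightarrow \A^{r-1}$ whose generic fibre is a geometrically irreducible curve. Over the finite field one cannot do this for all fibres at once, but one can stratify $\A^{r-1}(\F_q)$: for all but $O(q^{r-2})$ values $t \in \A^{r-1}(\F_q)$ the fibre $X_t$ is a geometrically irreducible curve, and the key point is that the relevant geometric invariants of these curves (genus, number of points at infinity, degree) are bounded uniformly in terms of the geometry of $X$, independently of $q$ and of $t$. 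Then summing the curve estimate $\#X_t(\F_q) = q + O(q^{1/2})$ over the $q^{r-1} + O(q^{r-2})$ good fibres yields $\#X(\F_q) = q^{r-1}\cdot q + O(q^{r-1}\cdot q^{1/2}) + O(q^{r-2}\cdot q) = q^r + O(q^{r-1/2})$, where the middle term dominates the error and the contribution of the bad fibres is absorbed. The base case $r=1$ is precisely Weil's theorem: a smooth projective geometrically irreducible curve of genus $g$ has $q+1-\sum\alpha_i$ points with $|\alpha_i| = q^{1/2}$, so $O(q^{1/2})$ error, and a general (possibly singular, possibly non-complete) geometrically irreducible curve differs from its smooth completion by $O(1)$ points.

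The main obstacle — and the technically delicate heart of the argument — is the \emph{uniformity} of the error term: one must ensure that when fibring $X$ into curves $X_t$, the implied constants (coming from the genera and degrees of the $X_t$, the number of bad parameters $t$, the number of points removed in passing to affine opens and smooth completions) depend only on $X$ and not on $q$ or on the stratum. This requires a constructibility/spreading-out argument: the locus of $t$ where $X_t$ fails to be a nice geometrically irreducible curve is a constructible subset of $\A^{r-1}$ defined over $\F_q$ of dimension $\le r-2$, and the genera of the good fibres are bounded because they form a constructible family (one can invoke generic smoothness / flatness and semicontinuity, or simply the existence of a family over a scheme of finite type). Assembling these uniform bounds and bookkeeping the error terms through the induction is the bulk of the work; once uniformity is in hand, the point-counting itself is the straightforward summation indicated above. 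An alternative to the curve-fibration induction, which one might instead carry out, is to intersect $X$ with a generic linear subspace of complementary dimension to reduce directly to the curve case via repeated hyperplane sections using Bertini, again with the same uniformity caveat as the crux.
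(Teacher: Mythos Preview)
The paper does not prove this lemma at all: it is simply quoted as the classical Lang--Weil theorem, with a citation to the original 1954 paper and no argument given. Your sketch is a correct outline of the standard proof (and indeed essentially the original Lang--Weil argument): reduce to irreducible components via inclusion--exclusion, dispose of the irreducible-but-not-geometrically-irreducible components by observing that their $\F_q$-points lie in the intersection of Galois conjugates, and then induct on dimension down to the curve case handled by Weil's Riemann hypothesis. You have also correctly identified the one genuinely nontrivial point, namely the uniformity of the implied constants across the fibres or hyperplane sections, which is exactly what Lang and Weil had to work to establish. Since the paper treats this as a black box, there is nothing further to compare.
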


We can now deduce Theorem \ref{Main Theorem} from Lemma \ref{mean variance} and Lemma \ref{lang weil} by following a similar argument to \cite[Sections 5,6]{poonen2020exceptional}. By applying Poonen and Slavov's reduction from \cite[Section 3]{poonen2020exceptional}, it suffices to work over a finite ground field $\F_q$. Moreover, by passing to a finite extension if necessary, we may assume that $q$ is sufficiently large. 


We call $H \in V(\F_q)$ \textit{very bad} if the number of $\F_q$-irreducible components of $X\cap H$ which are geometrically irreducible is not 1. Let $\mc{A}\subseteq V(\F_q)$ denote the set of very bad linear spaces $H$. The strategy will be to deduce Theorem \ref{Main Theorem} from appropriate upper and lower bounds for $\#\mc{A}$. Applying a similar argument to \cite[Lemma 6.2]{poonen2020exceptional}, we have for sufficiently large $q$ that
\begin{equation}\label{lower bound for A}
\#\mc{A}\gg \#\mathfrak{M}_{\textrm{bad}}^{(k)}(\F_q).
\end{equation}

In order to obtain an upper bound for $\#\mc{A}$, the idea is to show that if $H \in \mc{A}$, then the random variable $Z=\#(X\cap H)(\F_q)$ introduced in Section \ref{section mean and variance} differs considerably from the mean $\mu$. Hence there cannot be many such $H$ by the upper bound for the variance obtained in Lemma \ref{mean variance}. 

Let $r= \dim X$. Combining Lemma \ref{mean variance} and Lemma \ref{lang weil}, we have 
\begin{align*}
\mu &= q^{r-k} + O(q^{r-k-1/2})\\
\sigma^{2} &= O(q^{r-k}).
\end{align*}
If $H \in \mc{A}$, then by Lemma \ref{lang weil}, $\#(X \cap H)(\F_q)$ is either $O(q^{r-k-1/2})$ or at least $2q^{r-k}-O(q^{r-k-1/2})$. Consequently, 
\begin{align*}
\mid \# (X \cap H)(\mathbb{F}_q) - \mu \mid \geq q^{r-k}-O(q^{r-k-\frac{1}{2}}) \geq \frac{1}{2}q^{r-k},
\end{align*}
for sufficiently large $q$. Define $t$ such that $\frac{1}{2}q^{r-k} = t\sigma$. Then,
\begin{align*}
\textrm{Prob}(H\in \mc{A}) &\leq \textrm{Prob}(\mid \# (X \cap H)(\mathbb{F}_q) - \mu \mid \geq t\sigma)\\
&\leq \frac{1}{t^{2}} \quad\textrm{ (by Chebyshev's inequality)}\\
&= \frac{4\sigma^{2}}{q^{2(r-k)}}\\
&= O(q^{-r+k}).
\end{align*}
Multiplying by $\#V(\F_q)$, we obtain
\begin{equation}\label{upper bound for A}
\#\mc{A}= O(q^{\dim V - r + k}).
\end{equation}
Therefore, combining (\ref{lower bound for A}) and (\ref{upper bound for A}), we conclude that 
\begin{align*}
\dim \mathfrak{M}_{\textrm{bad}}^{(k)} \leq \dim V - r + k.
\end{align*}

\section{Proof of Theorem \ref{our result is sharp}}\label{proof of sharpness}

If $r=1$, then $k=1$, and so by Remark \ref{bertini can fail} it suffices to take $X$ to be any curve of degree at least $2$. From now on, we assume that $r\geq 2$. We use the same construction of $X$ as in \cite[Proposition 3.1]{benoist2011theoreme}, which we now recall for convenience. Fix an integral curve $C$ of degree at least $2$, and a linear space $L$ of dimension $r-2$ not containing $C$. Let $X\subseteq \PP^n$ be a cone with base $C$ and vertex $L$. Then $X$ is an irreducible variety of dimension $r$. 

Let $\mc{H}$ denote the locus of hyperplanes $H$ which contain $L$. We denote by $\mc{U}$ the locus of hyperplanes $H \in \mc{H}$ satisfying the following additional properties. 
\begin{enumerate}
    \item $H\cap C$ contains a point $P$ disjoint from $L$.
    \item $\dim(X \cap H) = r-1$.
    \item $\deg(X \cap H) \geq 2$.
\end{enumerate}
These properties hold for a generic $H\in \mc{H}$, so $\dim \mc{U} = \dim\mc{H}$. Let $N$ denote the linear span of $L$ and $P$. Suppose that $H\in \mc{U}$. Then $N \subseteq H$. Furthermore, $X$ contains all lines between $P$ and $L$, and hence also contains $N$. Therefore $N\subseteq X\cap H$. From the above properties, we deduce that $N$ is a proper closed subset of $X\cap H$ with $\dim N = r-1 = \dim (X \cap H)$, and hence $X\cap H$ is not irreducible. We conclude that 
$$\dim \mf{M}_{\textrm{bad}}^{(1)} \geq \dim \mc{U} =\dim \mc{H} = n-r+1,$$
and in fact, equality holds by Theorem \ref{benoist}.

We now generalise to an arbitrary $k\leq r$. Fix $H \in \mc{U}$. Let $\mc{M}_H$ denote the locus of linear spaces $M\in \mathbb{G}(n-k,n)$ satisfying $M \subseteq H$. Then 
\begin{equation}\label{dimension of MH}
\dim \mc{M}_H = \dim(\mathbb{G}(n-k,n-1))= \dim(\mathbb{G}(n-k,n)) - (n-k+1).
\end{equation}

For a generic $M \in \mc{M}_H$, we have 
\begin{enumerate}
    \item $\dim(X \cap M) = r-k = \dim(N \cap M)$.
    \item $\deg(X \cap M) \geq 2$.
    \item The linear span of $M$ and $L$ is $H$. 
\end{enumerate}
For such $M$, we have that $N\cap M$ is a proper closed subset of $X\cap M$ with the same dimension as $X\cap M$, and hence $M\in \mf{M}_{\textrm{bad}}^{(k)}$. Property (3) ensures that $H$ is the unique element of $\mc{U}$ containing $M$. Combining with (\ref{dimension of MH}), we deduce that 
\begin{align}
    \dim \mf{M}_{\textrm{bad}}^{(k)} 
    &\geq \dim(\mathbb{G}(n-k,n)) - (n-k+1) + \dim \mc{U} \label{actually an equality}\\
    &= \dim(\mathbb{G}(n-k,n)) + k - r,\nonumber
\end{align}
and from Theorem \ref{Main Theorem}, (\ref{actually an equality}) is in fact an equality.

\bibliographystyle{plain}
\bibliography{references.bib}

\begin{thebibliography}{1}

\bibitem{benoist2011theoreme}
{Benoist O.}
\newblock {Le th{\'e}or{\`e}me de Bertini en famille}.
\newblock {\em Bulletin de la Soci{\'e}t{\'e} Math{\'e}matique de France},
  139:555--569, 2011.

\bibitem{lang1954number}
{Lang S, and Weil A}.
\newblock Number of points of varieties in finite fields.
\newblock {\em American Journal of Mathematics}, 76:819--827, 1954.

\bibitem{poonen2020exceptional}
{Poonen B, and Slavov K.}
\newblock {The exceptional locus in the Bertini irreducibility theorem for a
  morphism}.
\newblock {\em International Mathematics Research Notices}, 2020.

\end{thebibliography}

\end{document}